\pgfplotsset{compat=1.17}
\newcommand{\N}{\mathbb N}
\newcommand{\E}{\mathbb{E}}
\renewcommand{\E}{\mathbb E}
\renewcommand{\S}{\mathcal S}
\renewcommand{\d}{\mathrm d}
\theoremstyle{plain}
\newtheorem{thm}{Theorem}[section]
\newtheorem{cor}[thm]{Corollary}
\newtheorem{lemma}[thm]{Lemma}
\theoremstyle{remark}
\newtheorem{remark}[thm]{Remark}
\title{Asymptotic Normality of Centroids of Random Polygons}
\author{Thorsten Neuschel$^\dag$}
\address{$^\dag$School of Mathematical Sciences, Dublin City University, Ireland}
\email{thorsten.neuschel@dcu.ie}
\begin{document}
\maketitle
\begin{abstract}  We explore the asymptotic behavior of the centroids of random polygons constructed from regular polygons with vertices on the unit circle by extending the rays so that their lengths form a random permutation of the first \(n\) integers. Surprisingly, this question has connections to diverse mathematical contexts, including random matrix theory and discrete Fourier transforms. Through rigorous analysis, we establish that the sequence of the suitably rescaled centroids converges to a circularly-symmetric complex normal distribution with variance \(\frac{1}{12}\). This result is a manifestation of central limit behavior in a setting involving sums of heavily dependent random variables.

\end{abstract}
\section{Introduction and Statement of Results}
For a positive integer \(n \in \mathbb{N}\), let us consider the regular polygon with vertices on the unit circle at the roots of unity \(\zeta_n^{}, \zeta_n^2 , \ldots, \zeta_n^n\), where \(\zeta_n = e^{2\pi i /n}\). We extend each of the rays \([0,\zeta_n^{} ]\),  \([0,\zeta_n^{2} ]\), \ldots,  \([0,\zeta_n^{n} ]\) by fixing the point at the origin and assigning a random permutation of the lengths \(1,\ldots,n\) to the segments. We obtain a polygon in the complex plane with random vertices 
\[\sigma(1)\zeta_n^{}, \sigma(2)\zeta_n^2 , \ldots, \sigma(n)\zeta_n^n,\] 
where \(\sigma \) is a random permutation from the symmetric group \(\mathcal{S}_n\),  acting on the numbers \(1,2,\ldots,n\).

\begin{figure}[h]
    \centering
 \begin{minipage}{0.42\textwidth}
        \centering
        \begin{tikzpicture}[scale=0.8]
            \def\n{5}
            
            \foreach \k in {1,2,...,\n} {
                \coordinate (z\k) at ({cos(360/\n * (\k-1))},{sin(360/\n * (\k-1))});
            }

            \def\perm{3, 5, 1, 4, 2}
            
            \foreach \k [count=\i from 1] in \perm {
                \draw[thick] (0,0) -- ($\k*(z\i)$);
                \node at ($\k*(z\i)$) [above right] {\(\k\)};
            }

            \draw[thick] 
                ($3*(z1)$) -- ($5*(z2)$) -- ($1*(z3)$) -- ($4*(z4)$) -- ($2*(z5)$) -- cycle;
            
            \draw[dashed] (0,0) circle (1);
            
            \node at (0,0) [below left] {O};
        \end{tikzpicture}
        \caption{A polygon with vertices determined by a permutation of lengths and roots of unity on the unit circle (\(n=5\)).}
        \label{fig:polygon5}
    \end{minipage}
    \begin{minipage}{0.42\textwidth}
        \centering
        \begin{tikzpicture}[scale=0.8]
            \def\n{6}
            
            \foreach \k in {1,2,...,\n} {
                \coordinate (z\k) at ({cos(360/\n * (\k-1))},{sin(360/\n * (\k-1))});
            }

            \def\perm{2, 5, 6, 1, 4, 3}
            
            \foreach \k [count=\i from 1] in \perm {
                \draw[thick] (0,0) -- ($\k*(z\i)$);
                \node at ($\k*(z\i)$) [above right] {\(\k\)};
            }

            \draw[thick] 
                ($2*(z1)$) -- ($5*(z2)$) -- ($6*(z3)$) -- ($1*(z4)$) -- ($4*(z5)$) -- ($3*(z6)$) -- cycle;
            
            \draw[dashed] (0,0) circle (1);
            
            \node at (0,0) [below left] {O};
        \end{tikzpicture}
        \caption{A polygon with vertices determined by a permutation of lengths and roots of unity on the unit circle (\(n=6\)).}
        \label{fig:polygon6}
    \end{minipage}
   
\end{figure}


 In the following, for every \(n\in\mathbb{N}\), we choose \(\sigma = \sigma_n\) to be a uniformly distributed \(\mathcal{S}_n\)-valued random variable, defined on the probability space \((\Omega, \mathcal{A}, \mathbb{P})\). The polygon's \emph{centroid} \(C_n\) is given by
\begin{equation*}
C_n = C_n(\sigma) = \frac{1}{n}\sum_{k=1}^n \sigma(k) e^{2\pi i k /n},\quad n \geq 1.
\end{equation*}
Hence, \((C_n)_n\) is a sequence of discrete complex-valued random variables, and each \(C_n\) is the sum of \emph{stochastically dependent} random variables. It is straightforward to check that the variables \(C_n\) are centered in expectation
\[\E [C_n] = \int_{\S_n}C_n(\sigma) \d \sigma = \frac{1}{n}\sum_{k=1}^n \frac{1}{n!}\left(\sum_{\sigma\in\S_n} \sigma(k) \right) \zeta_n^k = \frac{n+1}{2n}\sum_{k=1}^n \zeta_n^k =0,\]
however, it seems very difficult to describe their distribution precisely for finite values of \(n\). This is a reason why asymptotic analysis is so central in probability theory, it provides clear and often powerful insights into the behavior of random processes, especially when finite-level details are complex or unwieldy. Investigating the probabilistic properties of the quantities \((C_n)_n\), such as their distribution or distance from the expected position (the origin), is also motivated by the fact that this problem naturally appears in other areas under different guises. For instance, let us consider mass points \(m_1,m_2,\ldots,m_n\) with \(m_k =k\) for \(1\leq k \leq n\), and distribute them over the unit circle at the roots of unity in the following way: given a permutation \(\sigma \in S_n\), we place the mass \(m_{\sigma(k)}\) at the point \(e^{2\pi i k /n}\) for each \(k=1,\ldots,n\). The \emph{center of mass} or \emph{barycenter} then is given by
\begin{equation*}B_n = \frac{2}{n(n+1)} \sum_{k=1}^n \sigma(k) e^{2\pi i k /n} = \frac{2}{n+1}C_n,
\end{equation*}
where the normalization for the \(B_n\) comes from the fact that we have 
\[\sum_{k=1}^n \sigma(k) = \frac{n(n+1)}{2}.\]
It is a natural question to ask about the location of the center of mass for a generic permutation \(\sigma\).
We can also express \(C_n\) in terms of unitary \emph{random matrices} by
\[C_n = \frac{1}{n} \left(\zeta_n , \zeta_n^2, \ldots, \zeta_n^n \right)P_n(\sigma) \begin{pmatrix} 1\\ 2\\ \vdots \\ n\end{pmatrix},\]
where \(P_n(\sigma)\) is the random permutation matrix \(\left[\delta_{\sigma(j),k}\right]_{j,k=1}^n\), and \(\sigma \) is drawn uniformly from the symmetric group \(\mathcal{S}_n\). From this perspective, asking about the asymptotic behavior of \(C_n\) becomes a question from high-dimensional random matrix theory. Alternatively, it is noteworthy, without going into any details, that the centroids \(C_n\) can also be interpreted in terms of the \emph{discrete Fourier transform} of the random sequence \(\sigma(1),\ldots, \sigma(n)\), and it is certainly an interesting question how Fourier transforms with random input behave. A further historical origin of the interest in the behavior of \(C_n\), depending on the permutation \(\sigma\), stems from a problem proposed for the International Mathematical Olympiad in
Paris in the year 1983, the story of which is described in \cite{Woeg}: it turns out that the equation 
\begin{equation}\label{zero}C_n =  \frac{1}{n}\sum_{k=1}^n \sigma(k) e^{2\pi i k /n} = 0
\end{equation}
has a solution \(\sigma\in\mathcal{S}_n\) if and only if \(n\) has two distinct prime factors. For a recent generalisation of this result we refer to \cite{Abel}. In this light, it is a natural question to ask what can be said about the location of \(C_n\) for a generic permutation \(\sigma\), and how close is it to being a solution of equation \eqref{zero}. It is the goal of the present work to answer these questions rigorously for large values of \(n\) in the elementary setting for \(C_n\), while bearing in mind that there are many valid ways to develop and generalise these questions further in the aforementioned areas. The main result here is the following.

\begin{thm}\label{MainThm} Let \(X_n =\frac{1}{\sqrt{n}} \Re{[C_n]}\) and \(Y_n = \frac{1}{\sqrt{n}} \Im{[C_n]}\) for \(n\in\mathbb{N}\). Then the distribution of \((X_n,Y_n)^T\) converges weakly, as \(n\to\infty\), to a two-dimensional  Gaussian distribution with mean \(\mu = (0,0)^T\) and covariance matrix 
\[\Gamma = \begin{pmatrix} \frac{1}{24} & 0 \\ 0& \frac{1}{24} \end{pmatrix}.\] 
In particular, the sequence of rescaled centroids \(( Z_n)_n\) with \(Z_n = \frac{1}{\sqrt{n}}C_n\) converges weakly, as \(n\to\infty\), to a circularly-symmetric central complex normal distribution with variance \(\frac{1}{12}\), see Figure \ref{fig:3d_plot}.
\end{thm}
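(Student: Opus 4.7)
The natural plan is to reduce the bivariate convergence to a one-dimensional central limit theorem via the Cram\'er--Wold device, and then to apply Hoeffding's classical combinatorial CLT for linear statistics of uniform random permutations. For fixed $\alpha,\beta\in\mathbb{R}$, one rewrites
\[
\alpha X_n + \beta Y_n \;=\; \sum_{k=1}^n c_{n,k}\,\sigma(k),\qquad c_{n,k} \;=\; \frac{1}{n\sqrt{n}}\bigl(\alpha\cos(2\pi k/n)+\beta\sin(2\pi k/n)\bigr),
\]
so the problem reduces to showing weak convergence of this scalar permutation statistic to $N\bigl(0,(\alpha^2+\beta^2)/24\bigr)$ for every $(\alpha,\beta)$.

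The first step is an explicit moment computation. Since $\sum_{k=1}^n e^{2\pi ik/n}=0$ we have $\sum_k c_{n,k}=0$, so $\mathbb{E}[\alpha X_n+\beta Y_n]=0$. Using the elementary permutation identities $\mathrm{Var}(\sigma(k))=(n^2-1)/12$ and $\mathrm{Cov}(\sigma(k),\sigma(\ell))=-(n+1)/12$ for $k\neq\ell$, together with the trigonometric relations $\sum_k\cos^2(2\pi k/n)=\sum_k\sin^2(2\pi k/n)=n/2$ and $\sum_k\sin(2\pi k/n)\cos(2\pi k/n)=0$, one obtains
\[
\mathrm{Var}(\alpha X_n+\beta Y_n) \;=\; \frac{n+1}{12}\left(n\sum_k c_{n,k}^2-\bigl(\textstyle\sum_k c_{n,k}\bigr)^2\right) \;=\; \frac{(n+1)(\alpha^2+\beta^2)}{24\,n} \;\longrightarrow\; \frac{\alpha^2+\beta^2}{24}.
\]
This pins down the candidate limiting variance, and polarisation in $(\alpha,\beta)$ simultaneously forces $\mathrm{Var}(X_n),\mathrm{Var}(Y_n)\to 1/24$ and $\mathrm{Cov}(X_n,Y_n)\to 0$, matching the diagonal form of $\Gamma$.

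To upgrade from second moments to asymptotic normality I would write the scalar statistic as $\sum_k a_{n,k,\sigma(k)}$ with the separable array $a_{n,k,j}=j\,c_{n,k}$ and invoke Hoeffding's combinatorial CLT. The doubly-centered array $d_{k,j}=c_{n,k}\bigl(j-(n+1)/2\bigr)$ satisfies $|d_{k,j}|=O(n^{-1/2})$ uniformly in $k,j$, while the variance stays bounded below, so the Lindeberg-type hypothesis $\max_{k,j}d_{k,j}^2/\mathrm{Var}(\alpha X_n+\beta Y_n)\to 0$ holds at rate $O(1/n)$. Hoeffding's theorem then yields the desired one-dimensional convergence, the Cram\'er--Wold device promotes it to the joint weak convergence $(X_n,Y_n)^T\Rightarrow N(0,\Gamma)$, and the circularly-symmetric complex normal with variance $\tfrac{1}{12}$ for $Z_n=X_n+iY_n$ follows from $\mathbb{E}|Z|^2=\tfrac{1}{24}+\tfrac{1}{24}=\tfrac{1}{12}$ together with the independence of the two limiting Gaussian coordinates. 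The main obstacle is not any individual step above but the strong stochastic dependence among the summands $\sigma(k)$, reflected in the negative covariance $-(n+1)/12$; this is hidden inside Hoeffding's theorem, whose proof itself relies on Stein's method or a cumulant calculation. A self-contained alternative would be the method of moments, in which $\mathbb{E}\bigl[(\alpha X_n+\beta Y_n)^m\bigr]$ is expanded via joint factorial moments of $\sigma$ and the exponential-sum identity $\sum_k e^{2\pi ikj/n}=n\,\mathbf{1}_{n\,\mid\, j}$ is used to show that all cumulants of order $m\geq 3$ vanish in the limit.
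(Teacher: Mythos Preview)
Your proposal is correct and takes a genuinely different route from the paper. The paper proceeds by the method of moments: it expands $\mathbb{E}[X_n^{2m}]$ via the multinomial theorem and then carries out a lengthy asymptotic analysis of the resulting multiple power sums $\sum_{k_1\neq\cdots\neq k_j} k_1^{\alpha_1}\cdots k_j^{\alpha_j}$ and multiple cosine sums $\sum_{i_1\neq\cdots\neq i_j}\prod_\nu\cos(2\pi i_\nu/n)^{\alpha_\nu}$, eventually showing that the moments converge to those of $N(0,1/24)$. This is self-contained and produces explicit $O(1/n)$ error terms, but it is considerably longer, and as written it only treats the marginals $X_n$ and $Y_n$ separately together with $\mathrm{Cov}(X_n,Y_n)=0$; to rigorously obtain the joint statement one must still observe that the same computation goes through with $\cos$ replaced by $\alpha\cos+\beta\sin$. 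Your approach via Cram\'er--Wold and Hoeffding's combinatorial CLT is much shorter, delivers the bivariate convergence in one stroke, and isolates cleanly the structural reason for normality---the Lindeberg-type smallness $\max_{k,j}d_{k,j}^2=O(1/n)$ against a variance bounded away from zero---at the cost of invoking Hoeffding's theorem as a black box. Your closing remark that a self-contained alternative is the method of moments using the exponential-sum identity is exactly what the paper does.
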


\begin{remark} \begin{enumerate}

 \item The statement of Theorem \ref{MainThm} can be interpreted as a \emph{central limit theorem} for the specific type of sums of (heavily) dependent random variables considered here.
\item From the perspective of \emph{universality} as studied in probability and random matrix theory, the statement confirms and reinforces the universal nature of the normal distribution by adding a further example to the list of sums of random variables that behave asymptotically normal.
\end{enumerate}
\end{remark}

\begin{figure}[h]
    \centering
    \includegraphics[scale=0.7]{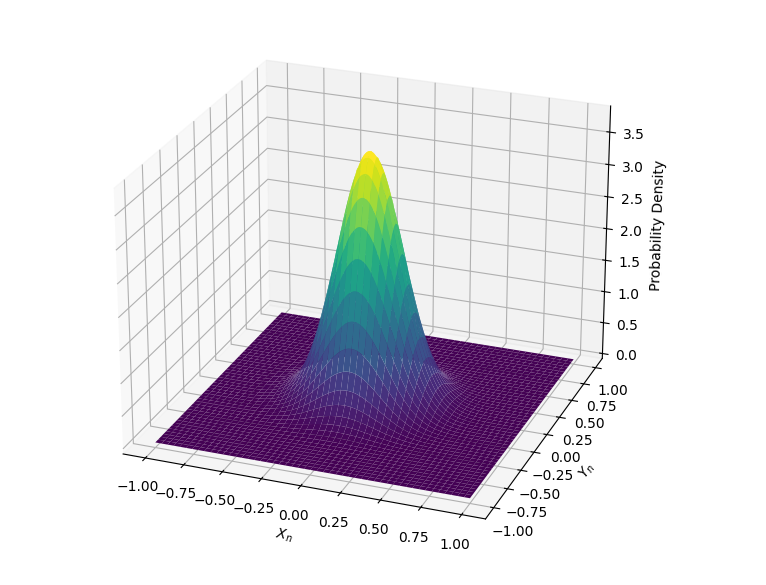}
    \caption{Density of the limiting distribution of the rescaled centroids on the complex plane.}
    \label{fig:3d_plot}
\end{figure}

From Theorem \ref{MainThm}, the following corollary immediately follows.
\begin{cor} The distance from the rescaled centroid \(Z_n\) to the origin is asymptotically, for large \(n\), distributed according to the Rayleigh distribution. More precisely, the distribution of \(\vert Z_n \vert \) converges weakly, as \(n\to\infty\), to the Rayleigh distribution with parameter \(\sigma^2 = \frac{1}{24}\) with density, see Figure \ref{fig:rayleigh_density},
\[f_{Ray}(x)=\frac{x}{\sigma^2}e^{-\frac{x^2}{2\sigma^2}}= 24x e^{-12 x^2}, \quad x \geq 0.\]
\end{cor}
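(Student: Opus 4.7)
The plan is to deduce the corollary directly from Theorem \ref{MainThm} by applying the continuous mapping theorem to the Euclidean norm. Since $|Z_n| = \sqrt{X_n^2 + Y_n^2}$ and the map $(x,y)\mapsto \sqrt{x^2+y^2}$ is continuous on $\mathbb{R}^2$, weak convergence of $(X_n,Y_n)^T$ to a Gaussian vector $(X,Y)^T$ with mean zero and covariance $\Gamma = \operatorname{diag}(1/24,1/24)$ transfers to weak convergence of $|Z_n|$ to $\sqrt{X^2+Y^2}$.

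Next, I would identify the limiting distribution explicitly. Because $X$ and $Y$ are independent (covariance matrix is diagonal and joint Gaussian), each is $N(0,\sigma^2)$ with $\sigma^2=1/24$, and it is a standard computation to check that $R=\sqrt{X^2+Y^2}$ follows the Rayleigh distribution with scale parameter $\sigma$. One way is to pass to polar coordinates in the joint density
\[
\frac{1}{2\pi\sigma^2}\exp\!\left(-\frac{x^2+y^2}{2\sigma^2}\right),
\]
integrate the angular variable to obtain the marginal density $f_R(r)=\frac{r}{\sigma^2}e^{-r^2/(2\sigma^2)}$ for $r\geq 0$, and then specialize $\sigma^2=1/24$ to get $f_{Ray}(x)=24xe^{-12x^2}$.

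There is essentially no obstacle here, as everything reduces to a one-line invocation of the continuous mapping theorem plus a classical computation of the norm of a two-dimensional centered isotropic Gaussian. The only thing worth noting for clarity is that the parameter convention used in the statement is $\sigma^2=1/24$ (so the \emph{scale} parameter is $\sigma=1/\sqrt{24}$), which matches the common parametrization in which the underlying Gaussian components have variance $\sigma^2$.
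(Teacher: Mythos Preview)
Your proposal is correct and matches the paper's approach: the paper gives no separate proof and simply states that the corollary ``immediately follows'' from Theorem~\ref{MainThm}, which is precisely your continuous mapping argument applied to the norm, followed by the standard identification of $\sqrt{X^2+Y^2}$ as Rayleigh for independent $N(0,\sigma^2)$ components. There is nothing to add.
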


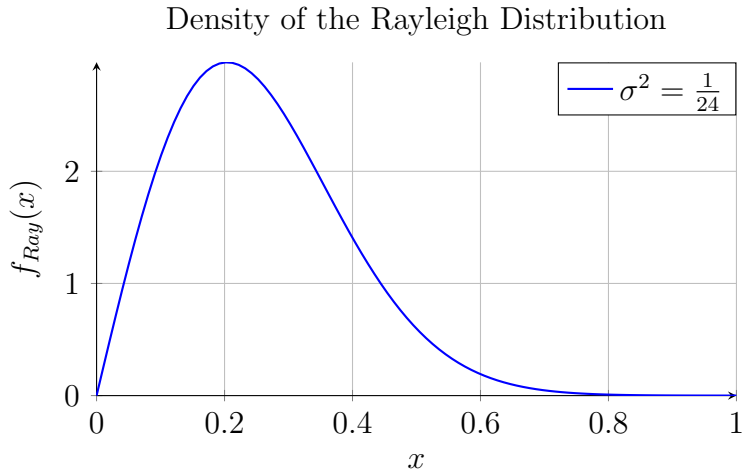
\begin{figure}[h]
    \centering
    \begin{tikzpicture}
        \begin{axis}[
            domain=0:1, 
            samples=100, 
            xlabel={$x$}, 
            ylabel={$f_{Ray}(x)$}, 
            axis lines=left, 
            width=10cm, 
            height=6cm, 
            grid=major, 
            title={Density of the Rayleigh Distribution}, 
            legend style={at={(1,1)}, anchor=north east} 
        ]
        \addplot[
            thick, 
            blue, 
        ] {24*x*exp(-12*x^2)}; 
        \addlegendentry{$\sigma^2 = \frac{1}{24}$}
        
        \end{axis}
    \end{tikzpicture}
    \caption{Density of the limiting distribution of the distance of the rescaled centroids from the origin.}
    \label{fig:rayleigh_density}
\end{figure}

\section{Proofs}

In this section we will give the proof of Theorem \ref{MainThm}. We focus on the real parts \(X_n =\frac{1}{\sqrt{n}} \Re{[C_n]}\), because the analysis of the imaginary parts \(Y_n\) works analogously and requires only minor modifications. Expanding the characteristic function of \(X_n\) in the usual manner, we obtain
\begin{equation*}\varphi_{X_n}(t) = \E \left[ e^{it X_n}\right] = \int_{\S_n} \sum_{m=0}^{\infty} \frac{i^m}{m!}t^m X_n^m \d \sigma =\sum_{m=0}^{\infty} \frac{i^m}{m!} t^m \int_{\S_n} X_n^m \d\sigma, 
\end{equation*}
with the \(m\)-th moment of \(X_n\) given by
\[ \int_{\S_n} X_n^m \d\sigma = \frac{1}{n!} \sum_{\sigma\in\S_n} \left( \sum_{j=1}^n \frac{\sigma(j)}{n^{3/2}} \cos \left(\frac{2\pi j}{n}\right)\right)^m.\]
By  Levy's continuity theorem, e.g. see Theorem 26.3 in \cite{Bill}, the statement of Theorem \ref{MainThm} regarding \(X_n\) is equivalent to the convergence  
\begin{equation}\label{l0}\lim_{n\to\infty} \varphi_{X_n}(t) = e^{-\frac{1}{48} t^2},\quad t\in \mathbb{R}, \end{equation}
which means that the characteristic function of \(X_n\) converges to the characteristic function of a normal distribution with mean zero and variance \(\sigma^2 = 1/24\). As any normal distribution is uniquely determined by its moments, it is sufficient for \eqref{l0} to prove the convergence of the moments, see e.g., Theorem 30.2 \cite{Bill}. That constitutes the core of this work and is established in Lemma \ref{lemma1}.

\begin{lemma} \label{lemma1} For \(m\in\N\) we have
\begin{equation} \label{l1}\lim_{n\to\infty}  \int_{\S_n} X_n^{2m} \d\sigma  =\frac{(2m)!}{m!}\frac{1}{(48)^m}\end{equation}
and
\begin{equation} \label{l2} \lim_{n\to\infty}  \int_{\S_n} X_n^{2m-1} \d\sigma  =0. \end{equation}
\end{lemma}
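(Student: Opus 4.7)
The plan is to compute the moments $\E[X_n^m]$ directly by a combinatorial expansion and isolate the leading Gaussian contribution.

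First I would write
$$X_n = \frac{1}{n^{3/2}} \sum_{k=1}^n \sigma(k) w_k, \qquad w_k := \cos(2\pi k/n),$$
and exploit $\sum_k w_k = 0$ to replace $\sigma(k)$ by the centered values $\tilde a_k = \sigma(k) - (n+1)/2$, which form a uniformly random permutation of the set $\{-(n-1)/2, \ldots, (n-1)/2\}$, symmetric under negation. Consequently the involution $\sigma \mapsto (k \mapsto n+1-\sigma(k))$ on $\mathcal{S}_n$ sends $X_n$ to $-X_n$, so $X_n$ is symmetrically distributed and \eqref{l2} holds exactly (not merely asymptotically) for every $n$.

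For even $m = 2k$, I would expand
$$\E[X_n^m] = \frac{1}{n^{3m/2}} \sum_{\pi} \frac{(n-r)!}{n!}\, M_{s_1,\ldots,s_r}\, W_{s_1,\ldots,s_r},$$
where $\pi$ ranges over set partitions of $\{1,\ldots,m\}$ with blocks of sizes $s_1,\ldots,s_r$, and
$$M_{s_1,\ldots,s_r} = \sum_{\substack{b_1,\ldots,b_r \\ \text{distinct centered values}}} b_1^{s_1}\cdots b_r^{s_r}, \qquad W_{s_1,\ldots,s_r} = \sum_{\substack{i_1,\ldots,i_r \in \{1,\ldots,n\} \\ \text{distinct}}} \prod_{\ell=1}^r w_{i_\ell}^{s_\ell}.$$
Both quantities can be rewritten, via inclusion-exclusion on the partition lattice, as polynomials in the power sums $p_s = \sum_j \tilde a_j^s$ and $q_s = \sum_i w_i^s$. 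The crucial facts are that $p_s = 0$ for odd $s$ (by symmetry of the value set), $q_s = 0$ for odd $s$ with $n > s$ (discrete Fourier orthogonality applied to the Chebyshev expansion of $\cos^s$), while $p_s = \Theta(n^{s+1})$ and $q_s = \Theta(n)$ for even $s$.

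These vanishing identities yield the bounds $|M_{s_1,\ldots,s_r}| = O(n^{m + r_o/2 + r_e})$ and $|W_{s_1,\ldots,s_r}| = O(n^{r_o/2 + r_e})$, where $r_e$, $r_o$ count the blocks of even, respectively odd, size; combining them shows that every partition contributes $O(n^{r_e - m/2})$. The constraints $r_o + 2 r_e \leq m$ together with $r_o$ even force $r_e \leq m/2$ with equality precisely when $r_o = 0$ and every $s_\ell = 2$, so only the $(2k-1)!!$ pair partitions survive in the limit. For each such partition the leading order is then explicit: $M_{2,\ldots,2} \sim p_2^k$ with $p_2 = n(n^2-1)/12$, $W_{2,\ldots,2} \sim q_2^k = (n/2)^k$, and $(n-k)!/n! \sim n^{-k}$, giving a contribution $1/24^k$. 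Summing over the $(2k-1)!! = (2k)!/(k!\,2^k)$ pair partitions recovers $(2k)!/(k!\, 48^k)$, matching \eqref{l1}. The main obstacle is the combinatorial bookkeeping above: one must exploit the odd-power-sum vanishing in exactly the right way so that partitions containing singleton (or, more generally, odd-sized) blocks, of which there are many, collectively contribute $o(1)$ rather than diverging.
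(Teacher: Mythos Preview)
Your approach is correct and takes a genuinely different route from the paper.

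The paper works with the \emph{uncentered} values $\sigma(k)$ throughout. It expands $X_n^{2m}$ by the multinomial theorem, groups by the composition $(\alpha_1,\ldots,\alpha_j)$ of block sizes, and then has to do all the hard work on the cosine side: because the uncentered power sums $\sum_k k^{\alpha}$ never vanish, Lemma~\ref{lemma2} gives only the crude asymptotic $n^{2m+j}/\prod(\alpha_\nu+1)$, and the cancellation that kills partitions with odd-sized blocks must come entirely from the cosine sums. This is the content of Lemma~\ref{lemma3}, which recursively reduces sums containing exponents $\alpha_\nu=1$ via $\sum_i\cos(2\pi i/n)=0$ and tracks the combinatorics carefully to extract the constant $(-1)^{j-m}(2(j-m))!/(2^j(j-m)!)$. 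A final binomial summation over $j$ then collapses to $1/48^m$. The odd-moment case is handled by a separate induction showing the cosine sums vanish exactly.

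By centering $\sigma(k)$ first, you make the two factors $M$ and $W$ play symmetric roles: both have vanishing odd power sums ($p_s=0$ by symmetry of the value set, $q_s=0$ by discrete Fourier orthogonality), so the M\"obius-inversion bounds $|M|=O(n^{m+r_o/2+r_e})$ and $|W|=O(n^{r_o/2+r_e})$ follow in parallel with no recursive reduction needed. The odd-moment vanishing becomes a one-line involution argument and is exact for every $n$, which is slightly sharper than what the paper states (though the paper's inductive proof also yields exact vanishing). The trade-off is that your argument hides the explicit combinatorics inside the partition-lattice inclusion--exclusion, whereas the paper's approach, though longer, keeps everything concrete; in particular, the paper never needs to invoke M\"obius inversion and arrives at the constant $1/48^m$ via an explicit binomial identity rather than directly from $p_2 q_2/n^4\to 1/24$.
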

We will prove Lemma \ref{lemma1} below. The analogous result also holds true for the imaginary parts \(Y_n\). Moreover, we can easily find that the real and imaginary parts are \emph{uncorrelated} for \(n>1\):
\begin{align}\nonumber \mathrm{Cov}\left(X_n,Y_n\right) &= \E\left[X_n Y_n\right] =  \frac{1}{n}\E\left[\Bigg(\frac{1}{n}\sum_{j=1}^n  \sigma(j)\cos\left(\frac{2\pi j}{n}\right)\Bigg)\Bigg( \frac{1}{n}\sum_{k=1}^n \sigma(k) \sin\left(\frac{2\pi k}{n}\right)\Bigg)\right]\\
&= \frac{1}{n^3} \sum_{j,k=1}^n \frac{1}{n!}\Bigg(\sum_{\sigma\in\S} \sigma(j) \sigma(k)\Bigg) \cos\left(\frac{2\pi j}{n}\right)\cos\left(\frac{2\pi k}{n}\right)\label{l-1}
\end{align}
An elementary computation shows
\[\sum_{j=1}^n \cos \left( \frac{2\pi j}{n}\right) = \sum_{k=1}^n \sin \left( \frac{2\pi k}{n}\right)= \sum_{j=1}^n \cos \left( \frac{2\pi j}{n}\right)\sin \left( \frac{2\pi j}{n}\right)=0,\]
as well as
\[\frac{1}{n!}\sum_{\sigma\in\S} \sigma(j) \sigma(k) = a_n \delta_{j,k} + b_n (1-\delta_{j,k}),\]
where \(\delta_{j,k}\) is Kronecker's delta and the expressions \(a_n, b_n\) are given by
\[a_n = \frac{(n+1)(2n+1)}{6} \quad \text{and}\quad b_n = \frac{1}{n(n-1)}\left(\frac{n^2(n+1)^2}{4} - \frac{n(n+1)(2n+1)}{6}\right).\]
Using this in \eqref{l-1} gives
\begin{align}\nonumber \mathrm{Cov}\left(X_n,Y_n\right) &= \frac{a_n}{n^3} \sum_{j=1}^n  \cos \left( \frac{2\pi j}{n}\right)\sin \left( \frac{2\pi j}{n}\right)+  \frac{b_n}{n^3} \sum_{j\neq k} \cos \left( \frac{2\pi j}{n}\right) \sin \left( \frac{2\pi k}{n}\right) \\
&=  \frac{b_n}{n^3} \Bigg( \sum_{j=1}^n \cos \left( \frac{2\pi j}{n}\right)  \sum_{k=1}^n \sin \left( \frac{2\pi k}{n}\right) -   \sum_{j=1}^n  \cos \left( \frac{2\pi j}{n}\right)\sin \left( \frac{2\pi j}{n}\right) \Bigg)\nonumber\\
&=0. \nonumber
\end{align}

The rest of the paper is devoted to the proof of Lemma \ref{lemma1}.
\begin{proof}[Proof of Lemma \ref{lemma1}] We start with the even moments in \eqref{l1}, and wherever we need an additional lemma, we will treat it along the way. In the following we always consider \(n\geq 2m\). We have

\begin{align*}  \int_{\S_n} X_n^{2m} \d\sigma &= \frac{1}{n!} \sum_{\sigma\in\S_n} \left( \sum_{j=1}^n \frac{\sigma(j)}{n^{3/2}} \cos \left(\frac{2\pi j}{n}\right)\right)^{2m}\\
&= \frac{1}{n^{3m}} \frac{1}{n!} \sum_{\sigma\in\S_n} \sum_{\substack{\alpha_1 , \ldots, \alpha_n \geq 0 \\ \alpha_1 + \ldots + \alpha_n = 2m }} \frac{(2m)!}{\alpha_1! \cdots \alpha_n!} \sigma(1)^{\alpha_1} \cdots \sigma(n)^{\alpha_n} \cos \left(\frac{2\pi }{n}\right)^{\alpha_1}\cdots  \cos \left(\frac{2\pi n }{n}\right)^{\alpha_n},
\end{align*}
where we use the multinomial theorem to expand the sum \( \left( \sum_{j=1}^n \frac{\sigma(j)}{n^{3/2}} \cos \left(\frac{2\pi j}{n}\right)\right)^{2m}\). As the summation of the the inner sum is over the set of all tuples \((\alpha_1, \ldots, \alpha_n)\) of integers \(\alpha_1 , \ldots, \alpha_n \geq 0\) with \(  \alpha_1 + \ldots + \alpha_n = 2m\), we can partition this set into the sets of all tuples with exactly \(j\) \emph{positive} integers for \(j=1,\ldots, 2m\), occuring at the places \(1\leq i_1 < i_2 < \ldots < i_j \leq n \). This yields
\begin{align*}  \int_{\S_n} X_n^{2m} \d\sigma &=  \frac{1}{n^{3m}} \frac{1}{n!} \sum_{\sigma\in\S_n}\sum_{j=1}^{2m} \sum_{1\leq i_1 < \ldots < i_j \leq n} \\
& \quad\quad \sum_{\substack{\alpha_1 , \ldots, \alpha_j \in \{1,\dots, 2m\} \\ \alpha_1 + \ldots + \alpha_j = 2m }}   \frac{(2m)!}{\alpha_1! \cdots \alpha_j!} \sigma(i_1)^{\alpha_1} \cdots \sigma(i_j)^{\alpha_j} \cos \left(\frac{2\pi i_1 }{n}\right)^{\alpha_1}\cdots  \cos \left(\frac{2\pi i_j }{n}\right)^{\alpha_j} \\
&= \frac{1}{n^{3m}} \sum_{j=1}^{2m} \sum_{\substack{\alpha_1 , \ldots, \alpha_j \in \{1,\dots, 2m\} \\ \alpha_1 + \ldots + \alpha_j = 2m }} \frac{(2m)!}{\alpha_1! \cdots \alpha_j!}  \sum_{1\leq i_1 < \ldots < i_j \leq n}\cos \left(\frac{2\pi i_1 }{n}\right)^{\alpha_1}\cdots  \cos \left(\frac{2\pi i_j }{n}\right)^{\alpha_j}\\
&   \quad\quad \frac{1}{n!} \sum_{\sigma\in\S_n} \sigma(i_1)^{\alpha_1} \cdots \sigma(i_j)^{\alpha_j}.
\end{align*}
Let us rewrite the innermost sum for given \(\alpha_1 , \ldots, \alpha_j \in \{1,\dots, 2m\} \) with \( \alpha_1 + \ldots + \alpha_j = 2m\) and  \(1\leq i_1 <\ldots< i_j \leq n\) in the following way
\begin{align*}  \frac{1}{n!} \sum_{\sigma\in\S_n} \sigma(i_1)^{\alpha_1} \cdots \sigma(i_j)^{\alpha_j} =& \frac{1}{n!} \sum_{\substack{k_1 \neq \ldots \neq k_j \\  k_1, \ldots, k_j \in \{1,\ldots, n\} \\ } }  \sum_{\substack{\sigma\in\S_n \\ \sigma(i_1)=k_1, \ldots, \sigma(i_j)= k_j} }  \sigma(i_1)^{\alpha_1} \cdots \sigma(i_j)^{\alpha_j} \\
=&  \frac{1}{n!} \sum_{\substack{k_1 \neq \ldots \neq k_j \\  k_1, \ldots, k_j \in \{1,\ldots, n\} \\ } } k_1^{\alpha_1} \cdots k_j^{\alpha_j}  \sum_{\substack{\sigma\in\S_n \\ \sigma(i_1)=k_1, \ldots, \sigma(i_j)= k_j} }1\\
=& \frac{(n-j)!}{n!} \sum_{\substack{k_1 \neq \ldots \neq k_j \\  k_1, \ldots, k_j \in \{1,\ldots, n\}  } } k_1^{\alpha_1} \cdots k_j^{\alpha_j},
\end{align*}
where the summation over \(k_1 \neq \ldots \neq k_j\) with \(  k_1, \ldots, k_j \in \{1,\ldots, n\}\) means summation over the set of all tuples \((k_1,\ldots, k_j) \in \{1,\ldots, n\}^j \) with pairwise distinct entries. In particular, it follows that the expression \( \frac{1}{n!} \sum_{\sigma\in\S_n} \sigma(i_1)^{\alpha_1} \cdots \sigma(i_j)^{\alpha_j}\) is independent of the concrete choice of indices \(1\leq i_1 <\ldots< i_j \leq n\). Hence, we obtain

\begin{align*}  \int_{\S_n} X_n^{2m} \d\sigma &=  \frac{1}{n^{3m}}\sum_{j=1}^{2m} \frac{(n-j)!}{n!}  \sum_{\substack{\alpha_1 , \ldots, \alpha_j \in \{1,\dots, 2m\} \\ \alpha_1 + \ldots + \alpha_j = 2m }} \frac{(2m)!}{\alpha_1! \cdots \alpha_j!} \Bigg(\sum_{\substack{k_1 \neq \ldots \neq k_j \\  k_1, \ldots, k_j \in \{1,\ldots, n\}  } } k_1^{\alpha_1} \cdots k_j^{\alpha_j}\Bigg)\\
&\quad \quad  \sum_{1\leq i_1 < \ldots < i_j \leq n}\cos \left(\frac{2\pi i_1 }{n}\right)^{\alpha_1}\cdots  \cos \left(\frac{2\pi i_j }{n}\right)^{\alpha_j}.
\end{align*}
In order to proceed, for each fixed \(j=1,\ldots, 2m\), we introduce an equivalence relation \(\mathcal{R}_j\) on the set of tuples \((\alpha_1, \ldots, \alpha_j)\) with  \(\alpha_1 , \ldots, \alpha_j \in \{1,\dots, 2m\} \) and \( \alpha_1 + \ldots + \alpha_j = 2m\): we write \((\alpha_1, \ldots, \alpha_j) \sim (\beta_1, \ldots, \beta_j)\) if there is a permutation \(\mu\in\S_j\) such that \[(\alpha_1, \ldots, \alpha_j) =(\beta_{\mu(1)}, \ldots, \beta_{\mu(j)}). \]
Moreover, for each equivalence class we choose one of its contained tuples \((\alpha_1, \ldots, \alpha_j)\), and, as usual, we denote this equivalence class by \([(\alpha_1, \ldots, \alpha_j)]\). This gives
\begin{align*}  \int_{\S_n} X_n^{2m} \d\sigma &=  \frac{1}{n^{3m}}\sum_{j=1}^{2m} \frac{(n-j)!}{n!} \sum_{[(\alpha_1, \ldots, \alpha_j)] \in \mathcal{R}_j} \sum_{(\beta_1, \ldots, \beta_j) \sim (\alpha_1, \ldots, \alpha_j)}  \frac{(2m)!}{\beta_1! \cdots \beta_j!}\\
&\quad\quad   \Bigg(\sum_{\substack{k_1 \neq \ldots \neq k_j \\  k_1, \ldots, k_j \in \{1,\ldots, n\}  } } k_1^{\beta_1} \cdots k_j^{\beta_j}  \Bigg) \sum_{1\leq i_1 < \ldots < i_j \leq n}\cos \left(\frac{2\pi i_1 }{n}\right)^{\beta_1}\cdots  \cos \left(\frac{2\pi i_j }{n}\right)^{\beta_j}. 
\end{align*}
We observe for \((\alpha_1, \ldots, \alpha_j) \sim (\beta_1, \ldots, \beta_j)\)
\[\frac{(2m)!}{\beta_1! \cdots \beta_j!} = \frac{(2m)!}{\alpha_1! \cdots \alpha_j!} \quad \text{and} \quad  \sum_{\substack{k_1 \neq \ldots \neq k_j \\  k_1, \ldots, k_j \in \{1,\ldots, n\}  } } k_1^{\beta_1} \cdots k_j^{\beta_j} =  \sum_{\substack{k_1 \neq \ldots \neq k_j \\  k_1, \ldots, k_j \in \{1,\ldots, n\}  } } k_1^{\alpha_1} \cdots k_j^{\alpha_j}, \]
so these quantities only depend on the equivalence class of the tuples, which yields
\begin{align}  \int_{\S_n} X_n^{2m} \d\sigma &=  \frac{1}{n^{3m}}\sum_{j=1}^{2m} \frac{(n-j)!}{n!} \sum_{[(\alpha_1, \ldots, \alpha_j)] \in \mathcal{R}_j} \frac{(2m)!}{\alpha_1! \cdots \alpha_j!}  \Bigg(\sum_{\substack{k_1 \neq \ldots \neq k_j \\  k_1, \ldots, k_j \in \{1,\ldots, n\}  } } k_1^{\alpha_1} \cdots k_j^{\alpha_j} \Bigg)\\
&\quad\quad  \sum_{(\beta_1, \ldots, \beta_j) \sim (\alpha_1, \ldots, \alpha_j)}  \sum_{1\leq i_1 < \ldots < i_j \leq n}\cos \left(\frac{2\pi i_1 }{n}\right)^{\beta_1}\cdots  \cos \left(\frac{2\pi i_j }{n}\right)^{\beta_j}. \label{l3}
\end{align}
The reason for the introduction of the equivalence classes \(\mathcal{R}_j\) above is that the expression
\[ \sum_{(\beta_1, \ldots, \beta_j) \sim (\alpha_1, \ldots, \alpha_j)} \cos \left(\frac{2\pi i_1 }{n}\right)^{\beta_1}\cdots  \cos \left(\frac{2\pi i_j }{n}\right)^{\beta_j}\]
is invariant under permutations of the indices \(1\leq i_1 <\cdots < i_j \leq n\), so we have
\begin{align*}& \sum_{(\beta_1, \ldots, \beta_j) \sim (\alpha_1, \ldots, \alpha_j)}  \sum_{1\leq i_1 < \ldots < i_j \leq n}\cos \left(\frac{2\pi i_1 }{n}\right)^{\beta_1}\cdots  \cos \left(\frac{2\pi i_j }{n}\right)^{\beta_j}\\
&=  \sum_{1\leq i_1 < \ldots < i_j \leq n} \sum_{(\beta_1, \ldots, \beta_j) \sim (\alpha_1, \ldots, \alpha_j)} \cos \left(\frac{2\pi i_1 }{n}\right)^{\beta_1}\cdots  \cos \left(\frac{2\pi i_j }{n}\right)^{\beta_j}\\
&= \frac{1}{j!} \sum_{\substack{i_1 \neq \ldots \neq i_j \\  i_1, \ldots, i_j \in \{1,\ldots, n\} } }\sum_{(\beta_1, \ldots, \beta_j) \sim (\alpha_1, \ldots, \alpha_j)} \cos \left(\frac{2\pi i_1 }{n}\right)^{\beta_1}\cdots  \cos \left(\frac{2\pi i_j }{n}\right)^{\beta_j}\\
&= \frac{1}{j!} \sum_{(\beta_1, \ldots, \beta_j) \sim (\alpha_1, \ldots, \alpha_j)}  \sum_{\substack{i_1 \neq \ldots \neq i_j \\  i_1, \ldots, i_j \in \{1,\ldots, n\} } } \cos \left(\frac{2\pi i_1 }{n}\right)^{\beta_1}\cdots  \cos \left(\frac{2\pi i_j }{n}\right)^{\beta_j}.
\end{align*}
Subsituting this into \eqref{l3} gives us
\begin{align}\nonumber  \int_{\S_n} X_n^{2m} \d\sigma &=  \frac{1}{n^{3m}}\sum_{j=1}^{2m} \frac{(n-j)!}{n!} \sum_{[(\alpha_1, \ldots, \alpha_j)] \in \mathcal{R}_j} \frac{(2m)!}{\alpha_1! \cdots \alpha_j!}  \Bigg(\sum_{\substack{k_1 \neq \ldots \neq k_j \\  k_1, \ldots, k_j \in \{1,\ldots, n\}  } } k_1^{\alpha_1} \cdots k_j^{\alpha_j} \Bigg)\\ \nonumber
&\quad\quad   \frac{1}{j!} \sum_{(\beta_1, \ldots, \beta_j) \sim (\alpha_1, \ldots, \alpha_j)}  \sum_{\substack{i_1 \neq \ldots \neq i_j \\  i_1, \ldots, i_j \in \{1,\ldots, n\} } } \cos \left(\frac{2\pi i_1 }{n}\right)^{\beta_1}\cdots  \cos \left(\frac{2\pi i_j }{n}\right)^{\beta_j}\\ \nonumber
&=  \frac{1}{n^{3m}}\sum_{j=1}^{2m}  \frac{1}{j!}  \frac{(n-j)!}{n!}  \sum_{[(\alpha_1, \ldots, \alpha_j)] \in \mathcal{R}_j}\sum_{(\beta_1, \ldots, \beta_j) \sim (\alpha_1, \ldots, \alpha_j)} \frac{(2m)!}{\beta_1! \cdots \beta_j!}\\ \nonumber
&\quad \Bigg(\sum_{\substack{k_1 \neq \ldots \neq k_j \\  k_1, \ldots, k_j \in \{1,\ldots, n\}  } } k_1^{\beta_1} \cdots k_j^{\beta_j} \Bigg) \Bigg( \sum_{\substack{i_1 \neq \ldots \neq i_j \\  i_1, \ldots, i_j \in \{1,\ldots, n\} } } \cos \left(\frac{2\pi i_1 }{n}\right)^{\beta_1}\cdots  \cos \left(\frac{2\pi i_j }{n}\right)^{\beta_j}\Bigg)\\ \nonumber
&=  \frac{1}{n^{3m}}\sum_{j=1}^{2m}  \frac{1}{j!}  \frac{(n-j)!}{n!}   \sum_{\substack{\alpha_1 , \ldots, \alpha_j \in \{1,\dots, 2m\} \\ \alpha_1 + \ldots + \alpha_j = 2m }} \frac{(2m)!}{\alpha_1! \cdots \alpha_j!}  \Bigg(\sum_{\substack{k_1 \neq \ldots \neq k_j \\  k_1, \ldots, k_j \in \{1,\ldots, n\}  } } k_1^{\alpha_1} \cdots k_j^{\alpha_j} \Bigg) \\
&\quad\quad  \sum_{\substack{i_1 \neq \ldots \neq i_j \\  i_1, \ldots, i_j \in \{1,\ldots, n\} } } \cos \left(\frac{2\pi i_1 }{n}\right)^{\alpha_1}\cdots  \cos \left(\frac{2\pi i_j }{n}\right)^{\alpha_j}. \label{l4}
\end{align}

Before proceeding with the proof, we need to deal with the asymptotic behavior of the multiple power sums appearing in \eqref{l4}.

\begin{lemma} \label{lemma2} For fixed integers \(j\geq 1\), \(\alpha_1, \ldots, \alpha_j \geq 1\), we have
\[\sum_{\substack{k_1 \neq \ldots \neq k_j \\  k_1, \ldots, k_j \in \{1,\ldots, n\}  } } k_1^{\alpha_1} \cdots k_j^{\alpha_j} = \frac{n^{\alpha_1 + \ldots + \alpha_j +j }}{(\alpha_1 +1)\cdots(\alpha_j +1)} \left(1+\mathcal{O}\left(\frac{1}{n}\right)\right),\quad n\to\infty, \]
where the constant implied by the \(\mathcal{O}\)-term can be chosen to be independent of \(n\), but it depends on \(j\) and \(\alpha_1, \ldots, \alpha_j\).
\end{lemma}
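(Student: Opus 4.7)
The plan is to reduce the restricted power sum to the product of one-dimensional power sums and then control the error introduced by removing the coincidence constraints via inclusion--exclusion.

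First I would recall Faulhaber's formula in the crude form
\[\sum_{k=1}^n k^{\alpha} = \frac{n^{\alpha+1}}{\alpha+1}\left(1+\mathcal{O}\!\left(\frac{1}{n}\right)\right),\qquad n\to\infty,\]
which holds for every fixed integer $\alpha\geq 1$. Multiplying these asymptotics, the \emph{unrestricted} sum factorises into a product and gives
\[\sum_{k_1,\ldots,k_j \in \{1,\ldots,n\}} k_1^{\alpha_1}\cdots k_j^{\alpha_j} = \prod_{\ell=1}^{j}\sum_{k=1}^{n}k^{\alpha_\ell} = \frac{n^{\alpha_1+\ldots+\alpha_j+j}}{(\alpha_1+1)\cdots(\alpha_j+1)}\left(1+\mathcal{O}\!\left(\frac{1}{n}\right)\right),\]
which already identifies the leading term we want.

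Next I would show that the contribution from tuples in which at least two of the indices coincide is of strictly smaller order, namely $\mathcal{O}(n^{\alpha_1+\ldots+\alpha_j+j-1})$. For a fixed pair of indices $1\leq r<s\leq j$, setting $k_r=k_s=k$ and summing freely over the remaining indices yields, by the same factorisation argument,
\[\sum_{\substack{k_1,\ldots,k_j\in\{1,\ldots,n\}\\ k_r=k_s}} k_1^{\alpha_1}\cdots k_j^{\alpha_j} = \Bigl(\sum_{k=1}^n k^{\alpha_r+\alpha_s}\Bigr)\prod_{\ell\neq r,s}\sum_{k=1}^n k^{\alpha_\ell} = \mathcal{O}\bigl(n^{\alpha_1+\ldots+\alpha_j+j-1}\bigr),\]
because merging two indices replaces two factors $n^{\alpha_r+1}n^{\alpha_s+1}$ by a single factor $n^{\alpha_r+\alpha_s+1}$, losing exactly one power of $n$. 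Standard inclusion--exclusion (over the lattice of set partitions of $\{1,\ldots,j\}$ recording which indices are forced equal) then bounds the total contribution of all tuples with at least one coincidence by a constant (depending on $j$ and the $\alpha_\ell$ but not on $n$) times $n^{\alpha_1+\ldots+\alpha_j+j-1}$, since partitions into $j'<j$ blocks yield sums of order $n^{\alpha_1+\ldots+\alpha_j+j'}\leq n^{\alpha_1+\ldots+\alpha_j+j-1}$.

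Subtracting this correction from the unrestricted sum, the restricted sum over pairwise distinct $k_1,\ldots,k_j$ equals
\[\frac{n^{\alpha_1+\ldots+\alpha_j+j}}{(\alpha_1+1)\cdots(\alpha_j+1)}\left(1+\mathcal{O}\!\left(\frac{1}{n}\right)\right) + \mathcal{O}\bigl(n^{\alpha_1+\ldots+\alpha_j+j-1}\bigr),\]
and absorbing the second error term into the first gives exactly the asserted asymptotic. I do not expect any genuine obstacle here: the only mildly delicate point is the bookkeeping of the implied constants in the inclusion--exclusion, but since $j$ and $\alpha_1,\ldots,\alpha_j$ are fixed and there are only finitely many set partitions of $\{1,\ldots,j\}$, these constants are manifestly independent of $n$.
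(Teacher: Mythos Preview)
Your proposal is correct and follows essentially the same route as the paper: Faulhaber for single power sums, factorisation of the unrestricted sum, and then a bound showing that tuples with at least one coincidence contribute $\mathcal{O}(n^{\alpha_1+\cdots+\alpha_j+j-1})$. The only cosmetic difference is that the paper dispenses with inclusion--exclusion over set partitions and simply uses the crude union bound $\sum_I\leq \sum_{1\leq r<s\leq j}\sum_{k_r=k_s}$, which already suffices since all terms are nonnegative.
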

\begin{proof}[Proof of Lemma \ref{lemma2}] The case \(j=1, \alpha_1 = \alpha \geq 1\) is a consequence of the well-know Faulhaber's formula \cite{Knuth}
\[\sum_{k=1}^n k^\alpha = \frac{n^{\alpha+1}}{\alpha +1} \left(1+ \sum_{k=1}^\alpha \binom{\alpha +1}{k}  \frac{B_k}{n^k}\right) = \frac{n^{\alpha+1}}{\alpha +1} \left(1+\mathcal{O}\left(\frac{1}{n}\right)\right),\]
as \(n\to\infty\), where \(B_k\) are the Bernoulli numbers with the convention \(B_1 = \frac{1}{2}\). From this we immediately obtain for \(j\geq 1\)
\begin{align}\nonumber\sum_{k_1, \ldots, k_j \in \{1,\ldots,n\}} k_1^{\alpha_1} \cdots k_j^{\alpha_j} &= \prod_{\nu=1}^j \sum_{k=1}^n k^{\alpha_{\nu}}  =\prod_{\nu=1}^j \frac{n^{\alpha_\nu +1}}{\alpha_\nu +1} \left(1+\mathcal{O}\left(\frac{1}{n}\right)\right)\\
&= \frac{n^{\alpha_1 + \ldots + \alpha_j +j }}{(\alpha_1 +1)\cdots(\alpha_j +1)} \left(1+\mathcal{O}\left(\frac{1}{n}\right)\right),\quad n\to\infty.\label{l5}
\end{align}
Moreover, we observe
\[\Bigg\vert \sum_{\substack{k_1 \neq \ldots \neq k_j \\  k_1, \ldots, k_j \in \{1,\ldots, n\}  } } k_1^{\alpha_1} \cdots k_j^{\alpha_j} - \sum_{k_1, \ldots, k_j \in \{1,\ldots,n\}} k_1^{\alpha_1} \cdots k_j^{\alpha_j} \Bigg\vert = \sum_{I}k_1^{\alpha_1} \cdots k_j^{\alpha_j}, \]
where the summation is carried out over the set 
\[I = \left\{(k_1,\ldots, k_j)\in \{1, \ldots, n\}^j ~\vert ~  \exists  i_1 < i_2 ~\text{with}~ k_{i_1} = k_{i_2} \right\} .\]
We can estimate the latter sum by
\begin{align*}\sum_{I}k_1^{\alpha_1} \cdots k_j^{\alpha_j} \leq & \sum_{1\leq i_1 < i_2 \leq j}  \sum_{\substack{  k_1, \ldots, k_j \in \{1,\ldots, n\}  \\  k_{i_1} = k_{i_2} } } k_1^{\alpha_1} \cdots k_j^{\alpha_j}\\
=&   \sum_{1\leq i_1 < i_2 \leq j} \Bigg(\sum_{k_{i_1}=1}^n k_{i_i}^{\alpha_{i_1}+\alpha_{i_i}} \Bigg)  \prod_{\substack{\nu = 1\\ \nu \neq i_1, i_2}}^j \Bigg( \sum_{k_\nu =1}^n k_\nu^{\alpha_\nu} \Bigg).
\end{align*}
From the case \(j=1\) we know
\[ \sum_{k_{i_1}=1}^n k_{i_i}^{\alpha_{i_1}+\alpha_{i_2}} = \frac{n^{\alpha_{i_1}+ \alpha_{i_2} +1}}{\alpha_{i_1}+ \alpha_{i_2} +1}\left(1+\mathcal{O}\left(\frac{1}{n}\right)\right) \quad \text{and}\quad  \sum_{k_\nu =1}^n k_\nu^{\alpha_\nu} =\frac{n^{\alpha_{\nu}+1}}{\alpha_\nu +1}\left(1+\mathcal{O}\left(\frac{1}{n}\right)\right), \]
as \(n\to \infty\), which amounts to
\begin{equation}\label{l6} \sum_{I}k_1^{\alpha_1} \cdots k_j^{\alpha_j} = \mathcal{O}\left( n^{\alpha_1 + \cdots + \alpha_j + j-1}\right), 
\end{equation}
as \(n\to \infty\). Combining \eqref{l5} and \eqref{l6} gives 
\begin{align*}\sum_{\substack{k_1 \neq \ldots \neq k_j \\  k_1, \ldots, k_j \in \{1,\ldots, n\}  } } k_1^{\alpha_1} \cdots k_j^{\alpha_j} =&  \frac{n^{\alpha_1 + \ldots + \alpha_j +j }}{(\alpha_1 +1)\cdots(\alpha_j +1)} \left(1+\mathcal{O}\left(\frac{1}{n}\right)\right) + \mathcal{O}\left( n^{\alpha_1 + \cdots + \alpha_j + j-1}\right)\\
=&  \frac{n^{\alpha_1 + \ldots + \alpha_j +j }}{(\alpha_1 +1)\cdots(\alpha_j +1)} \left(1+\mathcal{O}\left(\frac{1}{n}\right)\right),\quad \text{as}\quad n \to \infty. 
  \end{align*}
\end{proof}
\bigskip
Applying Lemma \ref{lemma2} to expression \eqref{l4} leads to
\begin{align}\nonumber  \int_{\S_n} X_n^{2m} \d\sigma &=\sum_{j=1}^{2m}  \frac{1}{j!}    \sum_{\substack{\alpha_1 , \ldots, \alpha_j \in \{1,\dots, 2m\} \\ \alpha_1 + \ldots + \alpha_j = 2m }} \frac{(2m)!}{(\alpha_1 +1)! \cdots (\alpha_j+1)!} \frac{(n-j)!~ n^j}{n!} \left( 1+ \mathcal{O}\left(\frac{1}{n}\right)\right) \\
&\quad\quad \frac{1}{n^m} \sum_{\substack{i_1 \neq \ldots \neq i_j \\  i_1, \ldots, i_j \in \{1,\ldots, n\} } } \cos \left(\frac{2\pi i_1 }{n}\right)^{\alpha_1}\cdots  \cos \left(\frac{2\pi i_j }{n}\right)^{\alpha_j}, \quad n\to\infty. \label{l7}
\end{align}
We let the expression \(\left( 1+ \mathcal{O}\left(\frac{1}{n}\right)\right)\) absorb the fraction \(\frac{(n-j)!~ n^j}{n!}\), and use the trivial bound 
\[ \Bigg\vert \sum_{\substack{i_1 \neq \ldots \neq i_j \\  i_1, \ldots, i_j \in \{1,\ldots, n\} } } \cos \left(\frac{2\pi i_1 }{n}\right)^{\alpha_1}\cdots  \cos \left(\frac{2\pi i_j }{n}\right)^{\alpha_j} \Bigg\vert \leq n^j\]
in order to conclude that, for each \(j\in\{1,\ldots, m-1\}\), the whole expression
\begin{align*}  \frac{1}{j!} &   \sum_{\substack{\alpha_1 , \ldots, \alpha_j \in \{1,\dots, 2m\} \\ \alpha_1 + \ldots + \alpha_j = 2m }} \frac{(2m)!}{(\alpha_1 +1)! \cdots (\alpha_j+1)!} \frac{(n-j)!~ n^j}{n!} \left( 1+ \mathcal{O}\left(\frac{1}{n}\right)\right) \\
&\quad\quad \frac{1}{n^m} \sum_{\substack{i_1 \neq \ldots \neq i_j \\  i_1, \ldots, i_j \in \{1,\ldots, n\} } } \cos \left(\frac{2\pi i_1 }{n}\right)^{\alpha_1}\cdots  \cos \left(\frac{2\pi i_j }{n}\right)^{\alpha_j}
\end{align*}
is of order \( \mathcal{O}\left(\frac{1}{n}\right)\), as \(n\to\infty\). Hence, in \eqref{l7} we only have to deal with summands indexed by \(j \in \{m,\ldots, 2m\}\), which means
\begin{align}\nonumber  \int_{\S_n} X_n^{2m} \d\sigma &=\sum_{j=m}^{2m}  \frac{1}{j!}    \sum_{\substack{\alpha_1 , \ldots, \alpha_j \in \{1,\dots, 2m\} \\ \alpha_1 + \ldots + \alpha_j = 2m }} \frac{(2m)!}{(\alpha_1 +1)! \cdots (\alpha_j+1)!}\left( 1+ \mathcal{O}\left(\frac{1}{n}\right)\right) \\
&\quad\quad \frac{1}{n^m} \sum_{\substack{i_1 \neq \ldots \neq i_j \\  i_1, \ldots, i_j \in \{1,\ldots, n\} } } \cos \left(\frac{2\pi i_1 }{n}\right)^{\alpha_1}\cdots  \cos \left(\frac{2\pi i_j }{n}\right)^{\alpha_j} +\mathcal{O}\left(\frac{1}{n}\right) , \label{l8}
\end{align}
 as \( n\to\infty\).
Next we turn to the multiple cosine sums.

\begin{lemma}\label{lemma3} Let us consider fixed integers \(m\in\N\), \(m\leq j \leq 2m\), and let us define \(\mathcal{A}_j\) as the set of all tuples \( (\alpha_1 , \ldots, \alpha_j) \in \{1,\ldots, 2m\}^j\) that solve the equation \(\alpha_1 + \ldots + \alpha_j = 2m\), with the property that exactly \(2m-j\) of the \(\alpha_\nu\) take the value 2, and the remaining \(2(j-m)\) of the \(\alpha_\nu\) take the value 1. Accordingly, let us denote by \(\mathcal{A}_j^c\) the set of tuples \( (\alpha_1 , \ldots, \alpha_j) \in \{1,\ldots, 2m\}^j\) that solve the equation \(\alpha_1 + \ldots + \alpha_j = 2m\), which are not contained in \(\mathcal{A}_j\). Then for \((\alpha_1, \ldots, \alpha_j) \in \mathcal{A}_j\) we have
\[\frac{1}{n^m} \sum_{\substack{i_1 \neq \ldots \neq i_j \\  i_1, \ldots, i_j \in \{1,\ldots, n\} } } \cos \left(\frac{2\pi i_1 }{n}\right)^{\alpha_1}\cdots  \cos \left(\frac{2\pi i_j }{n}\right)^{\alpha_j} = (-1)^{j-m} \frac{(2(j-m))!}{2^j (j-m)!}  \left(1+\mathcal{O} \left(\frac{1}{n}\right)\right), \]
and for \((\alpha_1, \ldots, \alpha_j) \in \mathcal{A}_j^c\) we have
\[\frac{1}{n^m} \sum_{\substack{i_1 \neq \ldots \neq i_j \\  i_1, \ldots, i_j \in \{1,\ldots, n\} } } \cos \left(\frac{2\pi i_1 }{n}\right)^{\alpha_1}\cdots  \cos \left(\frac{2\pi i_j }{n}\right)^{\alpha_j} =\mathcal{O} \left(\frac{1}{n}\right), \quad \text{as}\quad n\to\infty.\]
 
\end{lemma}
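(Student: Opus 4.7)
The plan is to reduce the restricted cosine sum over pairwise distinct indices to a combination of unrestricted sums by Möbius inversion on the partition lattice $\Pi_j$ of $\{1,\ldots,j\}$. For each $\pi \in \Pi_j$ with blocks $B_1, \ldots, B_{k(\pi)}$, set $\beta_l = \sum_{\nu \in B_l} \alpha_\nu$ and let
\[
U(\pi) = \prod_{l=1}^{k(\pi)} \sum_{\ell=1}^n \cos\left(\frac{2\pi \ell}{n}\right)^{\beta_l}
\]
be the sum obtained by forcing $i_\nu$ to be constant on each block of $\pi$. Inclusion--exclusion (equivalently, Möbius inversion) then yields
\[
\sum_{\substack{i_1 \neq \ldots \neq i_j \\ i_1,\ldots,i_j \in \{1,\ldots,n\}}} \prod_{\nu=1}^j \cos\left(\frac{2\pi i_\nu}{n}\right)^{\alpha_\nu} = \sum_{\pi \in \Pi_j} \mu(\hat 0, \pi)\, U(\pi),
\]
where $\mu(\hat 0, \pi) = \prod_{B \in \pi} (-1)^{|B|-1}(|B|-1)!$ is the standard Möbius function on $\Pi_j$.

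The next step is to evaluate each univariate factor. Starting from $\cos(2\pi \ell/n) = \tfrac{1}{2}(e^{2\pi i\ell/n} + e^{-2\pi i\ell/n})$ and using the geometric sum identity $\sum_{\ell=1}^n e^{2\pi i \ell s/n} = n\,\mathbf{1}_{n\mid s}$, one obtains for $n > 2m \geq \beta$ the closed form
\[
\sum_{\ell=1}^n \cos\left(\frac{2\pi \ell}{n}\right)^\beta = \begin{cases} n \cdot 2^{-\beta}\binom{\beta}{\beta/2} & \text{if } \beta \text{ even},\\ 0 & \text{if } \beta \text{ odd.}\end{cases}
\]
Consequently $U(\pi) = n^{k(\pi)} C(\pi)$ with $C(\pi) = \prod_l 2^{-\beta_l}\binom{\beta_l}{\beta_l/2}$ whenever all $\beta_l$ are even, and $U(\pi) = 0$ otherwise. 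After dividing by $n^m$, each summand contributes $n^{k(\pi)-m}\mu(\hat 0,\pi) C(\pi)$. Since every contributing block satisfies $\beta_l \geq 2$ while $\sum_l \beta_l = 2m$, necessarily $k(\pi) \leq m$, so every summand is bounded and only partitions with $k(\pi) = m$ can survive in the limit.

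It remains to extract the $k(\pi) = m$ contribution. Such partitions satisfy $\beta_l = 2$ for every block, forcing each block to be either a singleton $\{\nu\}$ with $\alpha_\nu = 2$ or a doubleton $\{\nu,\nu'\}$ with $\alpha_\nu = \alpha_{\nu'} = 1$. This configuration is possible if and only if $(\alpha_1, \ldots, \alpha_j) \in \mathcal{A}_j$; for $(\alpha_1,\ldots,\alpha_j) \in \mathcal{A}_j^c$ some $\alpha_\nu \geq 3$, no such $\pi$ exists, and the $k = m$ stratum is empty, giving the $\mathcal{O}(1/n)$ estimate. When $(\alpha_1,\ldots,\alpha_j) \in \mathcal{A}_j$, the singletons are pinned at the $2m-j$ positions with $\alpha_\nu = 2$, while the $2(j-m)$ positions with $\alpha_\nu = 1$ must be partitioned into pairs; the number of such perfect matchings is $(2(j-m))!/(2^{j-m}(j-m)!)$. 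Each contributing $\pi$ has $\mu(\hat 0,\pi) = (-1)^{j-m}$ (one factor $-1$ per doubleton) and $C(\pi) = (1/2)^m$ (one factor $\binom{2}{1}/4 = 1/2$ per block), yielding after multiplication the claimed value $(-1)^{j-m}(2(j-m))!/(2^j (j-m)!)$.

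The analytic ingredient—the explicit evaluation of the cosine power sums over roots of unity—is routine; the main obstacle lies in the combinatorial bookkeeping, namely correctly identifying which partitions survive the evenness constraint at leading order, matching the singleton/doubleton structure to the definition of $\mathcal{A}_j$, and coupling the Möbius signs with the count of perfect matchings to produce precisely the constant in the statement.
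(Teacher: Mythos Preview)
Your proof is correct but follows a genuinely different route from the paper. The paper argues by iterative dimension reduction: whenever some exponent equals $1$, it isolates that index, uses $\sum_{\ell=1}^n\cos(2\pi\ell/n)=0$ to rewrite the $j$-fold sum as a signed combination of $(j-1)$-fold sums with one exponent raised, and repeats. After $j-m$ such steps the $\mathcal{A}_j$ case reduces to the $(2,\ldots,2)$ sum, with the double factorial $(2(j-m)-1)!!$ accumulated as the product of the successive combinatorial factors; the $\mathcal{A}_j^c$ case still carries an exponent $1$ when dimension $m$ is reached and is therefore $\mathcal{O}(n^{m-1})$. Your M\"obius expansion over the partition lattice replaces this recursion by a single closed formula, so the leading term is read off at once from the $k(\pi)=m$ stratum: the sign $(-1)^{j-m}$, the perfect-matching count $(2(j-m))!/(2^{j-m}(j-m)!)$, and the factor $2^{-m}$ appear simultaneously rather than step by step, and your observation that $\mathcal{A}_j^c$ is exactly the set of compositions containing some $\alpha_\nu\ge 3$ makes the emptiness of that stratum immediate. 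The paper's recursive approach is more elementary and reuses the same peeling trick to prove the exact vanishing of the odd moments; your approach is more structural and would generalise more readily to related sums.
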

\begin{proof}[Proof of Lemma \ref{lemma3}] We first note that the number, say \(\nu\),  of entries in a tuple \( (\alpha_1 , \ldots, \alpha_j) \in \{1,\ldots, 2m\}^j\) that solves  \(\alpha_1 + \ldots + \alpha_j = 2m\) is at least \(2(j-m)\). This follows directly from 
\[2m = \alpha_1 + \ldots+ \alpha_j \geq \nu + 2(j - \nu). \]
Hence, the set \(\mathcal{A}_j\) consists of all solutions of the equation  \(\alpha_1 + \ldots + \alpha_j = 2m\) that contain the minimal number of unit entries, meaning exactly \(2(j-m)\) many. Moreover, we note that we have for integers \(\ell, n \in \mathbb{N}\)
\begin{equation} \label{l9} \sum_{\nu =1}^n \cos\left( \frac{2\pi \nu}{n}\right)^{\ell} = \begin{cases} \frac{n}{2^{\ell}} \binom{\ell}{\ell/2} & \text{if \(\ell\) is even,}  \\  0 & \text{if \(\ell\) is odd}.\end{cases}
\end{equation}
Let us initially focus on the case \(j=m\), in which the set  \(\mathcal{A}_j\) only consists of the tuple \((\alpha_1, \ldots, \alpha_m) = (2,\ldots, 2)\). Using similar arguments to those  used in the proof of Lemma \ref{lemma2}, and then using \eqref{l9}, we see that for  this tuple
\begin{align}\nonumber \sum_{\substack{i_1 \neq \ldots \neq i_m \\  i_1, \ldots, i_m \in \{1,\ldots, n\} } } &\cos \left(\frac{2\pi i_1 }{n}\right)^{2}\cdots  \cos \left(\frac{2\pi i_m }{n}\right)^{2}\\ \nonumber
& = \sum_{ i_1, \ldots, i_m \in \{1,\ldots, n\}  } \cos \left(\frac{2\pi i_1 }{n}\right)^{2}\cdots  \cos \left(\frac{2\pi i_m }{n}\right)^{2} + \mathcal{O}\left(n^{m-1}\right)\\ \nonumber
&= \left(\sum_{\nu =1}^n \cos\left( \frac{2\pi \nu}{n}\right)^{2}\right)^m  + \mathcal{O}\left(n^{m-1}\right)\\ \label{l10}
	&=  \left(\frac{n}{2}\right)^m+ \mathcal{O}\left(n^{m-1}\right), \quad \text{as}\quad n\to\infty. 
\end{align}
If, on the other hand, we consider a tuple \((\alpha_1, \ldots, \alpha_m)  \in \mathcal{A}_m^c\), then there is at least one entry equal to one. Without loss of generality, we may assume that \(\alpha_m =1\).  Then, using  \eqref{l9}, we have
\begin{align} \label{l11}&\sum_{\substack{i_1 \neq \ldots \neq i_m \\  i_1, \ldots, i_m \in \{1,\ldots, n\} } } \cos \left(\frac{2\pi i_1 }{n}\right)^{\alpha_1}\cdots  \cos \left(\frac{2\pi i_m }{n}\right)^{\alpha_m =1}\\\nonumber
&\quad = \sum_{\substack{i_1 \neq \ldots \neq i_{m-1} \\  i_1, \ldots, i_{m-1} \in \{1,\ldots, n\} } } \cos \left(\frac{2\pi i_1 }{n}\right)^{\alpha_1}\cdots  \cos \left(\frac{2\pi i_{m-1} }{n}\right)^{\alpha_{m-1}}\sum_{\substack{i_{m} \in\{1,\ldots, n\} \\  i_m \notin  \{i_1, \ldots, i_{m-1}\} } } \cos \left(\frac{2\pi i_m }{n}\right)\\\nonumber
&\quad =  - \sum_{\substack{i_1 \neq \ldots \neq i_{m-1} \\  i_1, \ldots, i_{m-1} \in \{1,\ldots, n\} } } \cos \left(\frac{2\pi i_1 }{n}\right)^{\alpha_1}\cdots  \cos \left(\frac{2\pi i_{m-1} }{n}\right)^{\alpha_{m-1}}\sum_{  i_m \in  \{i_1, \ldots, i_{m-1}\} }  \cos \left(\frac{2\pi i_m }{n}\right)\\\nonumber
&\quad = - \sum_{\nu =1}^{m-1}  \sum_{\substack{i_1 \neq \ldots \neq i_{m-1} \\  i_1, \ldots, i_{m-1} \in \{1,\ldots, n\} } } \cos \left(\frac{2\pi i_1 }{n}\right)^{\alpha_1}\cdots   \cos \left(\frac{2\pi i_{\nu} }{n}\right)^{\alpha_{\nu}+1} \cdots   \cos \left(\frac{2\pi i_{m-1} }{n}\right)^{\alpha_{m-1}}, 
 \end{align}
and each of the latter sums is of order \(\mathcal{O}\left(n^{m-1}\right)\), as \(n\to\infty\), which proves the statement of the lemma in the case \(j=m\). 

Next we deal with the case \(m < j \leq 2m \).  Let us first consider a tuple  \((\alpha_1, \ldots, \alpha_j)  \in \mathcal{A}_j^c\), so the tuple consists of at least  \(2(j-m) +1\) many entries taking the value one. Without loss of generality, we may assume that  \(\alpha_j = 1 \). Then, by the same reasoning as in \eqref{l11}, we have

\begin{align*} &\sum_{\substack{i_1 \neq \ldots \neq i_j \\  i_1, \ldots, i_j \in \{1,\ldots, n\} } } \cos \left(\frac{2\pi i_1 }{n}\right)^{\alpha_1}\cdots  \cos \left(\frac{2\pi i_j }{n}\right)^{\alpha_j =1}\\
&\quad = - \sum_{\nu =1}^{j-1}  \sum_{\substack{i_1 \neq \ldots \neq i_{j-1} \\  i_1, \ldots, i_{j-1} \in \{1,\ldots, n\} } } \cos \left(\frac{2\pi i_1 }{n}\right)^{\alpha_1}\cdots   \cos \left(\frac{2\pi i_{\nu} }{n}\right)^{\alpha_{\nu}+1} \cdots   \cos \left(\frac{2\pi i_{j-1} }{n}\right)^{\alpha_{j-1}},
 \end{align*}
where the number of entries equal to one in the tuple of exponents of the cosine terms in each of the sums 
\[ \sum_{\substack{i_1 \neq \ldots \neq i_{j-1} \\  i_1, \ldots, i_{j-1} \in \{1,\ldots, n\} } } \cos \left(\frac{2\pi i_1 }{n}\right)^{\alpha_1}\cdots   \cos \left(\frac{2\pi i_{\nu} }{n}\right)^{\alpha_{\nu}+1} \cdots   \cos \left(\frac{2\pi i_{j-1} }{n}\right)^{\alpha_{j-1}}, \quad \nu =1, \ldots, j-1,\]
is reduced at least by one and at most by two, compared to the inital tuple \((\alpha_1, \ldots, \alpha_j)\).  We can repeat this procedure of reduction \(j-m\) times, and in this process the number of entries equal to one in the tuples of exponents of the resulting sums are reduced by at most \(2(j-m)\). Hence, each of the produced sums is of the form \eqref{l11}, which means it is of order \(\mathcal{O}\left(n^{m-1}\right)\). This shows that for \((\alpha_1, \ldots, \alpha_j)  \in \mathcal{A}_j^c\)
\[\sum_{\substack{i_1 \neq \ldots \neq i_j \\  i_1, \ldots, i_j \in \{1,\ldots, n\} } } \cos \left(\frac{2\pi i_1 }{n}\right)^{\alpha_1}\cdots  \cos \left(\frac{2\pi i_j }{n}\right)^{\alpha_j } = \mathcal{O}\left(n^{m-1}\right), \quad \text{as}\quad n\to\infty.\]
Finally, let us consider a tuple  \((\alpha_1, \ldots, \alpha_j)  \in \mathcal{A}_j\), so the tuple consists of exactly  \(2(j-m)\) many entries taking the value one, and the remaining \(2m-j\) entries take the value two. Without loss of generality, we may assume that \(\alpha_1 = \ldots =\alpha_{2m-j}=2\) and \(\alpha_{2m-j +1}=  \ldots =\alpha_j = 1 \). Then, again using \eqref{l9}, we have
\begin{align*}&\sum_{\substack{i_1 \neq \ldots \neq i_j \\  i_1, \ldots, i_j \in \{1,\ldots, n\} } } \cos \left(\frac{2\pi i_1 }{n}\right)^{2}\cdots \cos \left(\frac{2\pi i_{2m-j} }{n}\right)^{2}  \cos \left(\frac{2\pi i_{2m-j+1} }{n}\right)^{} \cdots\cos \left(\frac{2\pi i_j }{n}\right)^{ }\\
&= -\sum_{\nu=1}^{2m-j}\sum_{\substack{i_1 \neq \ldots \neq i_{j-1} \\  i_1, \ldots, i_{j-1} \in \{1,\ldots, n\} } } \cos \left(\frac{2\pi i_\nu }{n}\right)^{3} \Bigg(\prod_{\substack{\ell =1\\ \ell \neq \nu}}^{2m-j} \cos \left(\frac{2\pi i_\ell }{n}\right)^{2} \Bigg) \Bigg(\prod_{\ell =2m-j+1}^{j-1}  \cos \left(\frac{2\pi i_\ell }{n}\right)^{}\Bigg)\\
& -\sum_{\nu=2m-j+1}^{j-1}\sum_{\substack{i_1 \neq \ldots \neq i_{j-1} \\  i_1, \ldots, i_{j-1} \in \{1,\ldots, n\} } } \Bigg(\prod_{\ell =1}^{2m-j} \cos \left(\frac{2\pi i_\ell }{n}\right)^{2} \Bigg)   \cos \left(\frac{2\pi i_\nu }{n}\right)^{2} \Bigg(\prod_{\substack{\ell =2m-j+1\\ \ell \neq \nu}}^{j-1}  \cos \left(\frac{2\pi i_\ell }{n}\right)^{}\Bigg).
\end{align*}
We observe that, regarding in the first sum, each multiple sum indexed by \(\nu = 1, \ldots, 2m-j\) 
\[\sum_{\substack{i_1 \neq \ldots \neq i_{j-1} \\  i_1, \ldots, i_{j-1} \in \{1,\ldots, n\} } } \cos \left(\frac{2\pi i_\nu }{n}\right)^{3} \Bigg(\prod_{\substack{\ell =1\\ \ell \neq \nu}}^{2m-j} \cos \left(\frac{2\pi i_\ell }{n}\right)^{2} \Bigg) \Bigg(\prod_{\ell =2m-j+1}^{j-1}  \cos \left(\frac{2\pi i_\ell }{n}\right)^{}\Bigg)\]
is of dimension \(j-1\), and among the exponents of the cosine terms, we find exactly \(2j-2m-1\) of them being equal to one. Hence, as before by using \eqref{l9}, we can reduce such a sum in \(j-1-m\) steps to an \(m\)-dimensional sum of the form \eqref{l11}, meaning that each of these sums is of order \(\mathcal{O}\left(n^{m-1}\right)\). Moreover, we note that the sums indexed by \(\nu = 2m-j+1, \ldots, j-1\) are actually independent of the index \(\nu\). This gives
\begin{align*}&\sum_{\substack{i_1 \neq \ldots \neq i_j \\  i_1, \ldots, i_j \in \{1,\ldots, n\} } } \cos \left(\frac{2\pi i_1 }{n}\right)^{2}\cdots \cos \left(\frac{2\pi i_{2m-j} }{n}\right)^{2}  \cos \left(\frac{2\pi i_{2m-j+1} }{n}\right)^{} \cdots \cos \left(\frac{2\pi i_j }{n}\right)^{ }\\
& = -(2(j-m)-1)\sum_{\substack{i_1 \neq \ldots \neq i_{j-1} \\  i_1, \ldots, i_{j-1} \in \{1,\ldots, n\} } } \Bigg(\prod_{\ell =1}^{2m-j +1} \cos \left(\frac{2\pi i_\ell }{n}\right)^{2} \Bigg)   \Bigg(\prod_{\ell =2m-j+1}^{j-1}  \cos \left(\frac{2\pi i_\ell }{n}\right)^{}\Bigg)\\
& \quad + \mathcal{O}\left(n^{m-1}\right), \quad \text{as}\quad n\to\infty.
\end{align*}
We can repeat this process with the latter sum, and after \(j-m\) steps we arrive at
 \begin{align*}&\sum_{\substack{i_1 \neq \ldots \neq i_j \\  i_1, \ldots, i_j \in \{1,\ldots, n\} } } \cos \left(\frac{2\pi i_1 }{n}\right)^{2}\cdots \cos \left(\frac{2\pi i_{2m-j} }{n}\right)^{2}  \cos \left(\frac{2\pi i_{2m-j+1} }{n}\right)^{} \cdots\cos \left(\frac{2\pi i_j }{n}\right)^{ }\\
&= (-1)^{j-m} \prod_{\nu =1}^{j-m} (2\nu -1) \sum_{\substack{i_1 \neq \ldots \neq i_m \\  i_1, \ldots, i_m \in \{1,\ldots, n\} } } \cos \left(\frac{2\pi i_1 }{n}\right)^{2}\cdots  \cos \left(\frac{2\pi i_m }{n}\right)^{2}  + \mathcal{O}\left(n^{m-1}\right) \\
&=  (-1)^{j-m} \frac{(2(j-m))!}{(j-m)! 2^j} n^m+ \mathcal{O}\left(n^{m-1}\right),
\end{align*} 
as \(n \to \infty\), where we used \eqref{l10} in the last step.
\end{proof}
\bigskip
We now can return to the proof of Lemma \ref{lemma1}. Applying Lemma \ref{lemma3} to \eqref{l8} yields
\begin{align}\label{l12}  \int_{\S_n} X_n^{2m} \d\sigma &=\sum_{j=m}^{2m}  \frac{(-1)^{j-m}}{j!} \frac{(2(j-m))! (2m)! }{2^j (j-m)!}    \sum_{(\alpha_1 , \ldots, \alpha_j) \in \mathcal{A}_j} \frac{1}{(\alpha_1 +1)! \cdots (\alpha_j+1)!}\\ \nonumber
&\quad\quad  +\mathcal{O}\left(\frac{1}{n}\right) , \quad n\to\infty.
\end{align}
Using the definition of the sets \(\mathcal{A}_j\) in Lemma \ref{lemma3}, we can evaluate the inner sum in \eqref{l12} explicitly
\begin{align*}\sum_{(\alpha_1 , \ldots, \alpha_j) \in \mathcal{A}_j} \frac{1}{(\alpha_1 +1)! \cdots (\alpha_j+1)!} &= \frac{1}{(2!)^{2(j-m)}} \frac{1}{(3!)^{2m-j}}\binom{j}{2(j-m)} \\
&=\frac{1}{2^j 3^{2m-j}} \frac{j!}{(2(j-m))! (2m-j)!}.
\end{align*}
Substituting this into \eqref{l12}, some cancellation and shifting the summation index then leads to
\begin{align*}  \int_{\S_n} X_n^{2m} \d\sigma &= \frac{(2m)!}{4^m m!} \sum_{j=0}^m \binom{m}{j} \left(-\frac{1}{4}\right)^j \left(\frac{1}{3}\right)^{m-j} +\mathcal{O}\left(\frac{1}{n}\right)\\
&= \frac{(2m)!}{m!} \left(\frac{1}{48}\right)^{m}+\mathcal{O}\left(\frac{1}{n}\right), \quad \text{as}\quad  n\to\infty.
\end{align*}
This proves the statement \eqref{l1} of Lemma \ref{lemma1} for even moments. In order to verify \eqref{l2}, we observe for \(n\geq 2m\) that the analogue of the representation \eqref{l4} for odd moments is 
\begin{align*}\nonumber  \int_{\S_n} X_n^{2m-1} \d\sigma 
&=  \frac{1}{n^{3m-3/2}}\sum_{j=1}^{2m-1}  \frac{1}{j!}  \frac{(n-j)!}{n!}   \sum_{\substack{\alpha_1 , \ldots, \alpha_j \in \{1,\dots, 2m-1\} \\ \alpha_1 + \ldots + \alpha_j = 2m-1 }} \frac{(2m-1)!}{\alpha_1! \cdots \alpha_j!}   \\
&\quad \Bigg(\sum_{\substack{k_1 \neq \ldots \neq k_j \\  k_1, \ldots, k_j \in \{1,\ldots, n\}  } } k_1^{\alpha_1} \cdots k_j^{\alpha_j} \Bigg)  \sum_{\substack{i_1 \neq \ldots \neq i_j \\  i_1, \ldots, i_j \in \{1,\ldots, n\} } } \cos \left(\frac{2\pi i_1 }{n}\right)^{\alpha_1}\cdots  \cos \left(\frac{2\pi i_j }{n}\right)^{\alpha_j}. 
\end{align*}
We are going to show inductively for every \(j=1,\ldots, 2m-1\) the following claim:  for every tuple \((\alpha_1 , \ldots, \alpha_j) \in \{1,\dots, 2m-1\}^j\) satisfying \(\alpha_1 + \ldots + \alpha_j =2m-1\) we have 
\begin{equation}\label{l13}\sum_{\substack{i_1 \neq \ldots \neq i_j \\  i_1, \ldots, i_j \in \{1,\ldots, n\} } } \cos \left(\frac{2\pi i_1 }{n}\right)^{\alpha_1}\cdots  \cos \left(\frac{2\pi i_j }{n}\right)^{\alpha_j} =0.
\end{equation}
The case \(j=1\) follows directly from \eqref{l9}. Let us assume the claim holds true for \(j-1\) for some \(j>1\). As at least one of the \(\alpha_1, \ldots, \alpha_j\) must be odd, without loss of generality, we can assume that \(\alpha_j\) is odd. Then, using \eqref{l9}, we have
\begin{align*} &\sum_{\substack{i_1 \neq \ldots \neq i_j \\  i_1, \ldots, i_j \in \{1,\ldots, n\} } } \cos \left(\frac{2\pi i_1 }{n}\right)^{\alpha_1}\cdots  \cos \left(\frac{2\pi i_j }{n}\right)^{\alpha_j }\\
&\quad = - \sum_{\nu =1}^{j-1}  \sum_{\substack{i_1 \neq \ldots \neq i_{j-1} \\  i_1, \ldots, i_{j-1} \in \{1,\ldots, n\} } } \cos \left(\frac{2\pi i_1 }{n}\right)^{\alpha_1}\cdots   \cos \left(\frac{2\pi i_{\nu} }{n}\right)^{\alpha_{\nu}+\alpha_j} \cdots   \cos \left(\frac{2\pi i_{j-1} }{n}\right)^{\alpha_{j-1}},
 \end{align*}
and each of the sums vanishes due to the induction hypothesis. This proves the claim \eqref{l13}, thus statement \eqref{l2}, and thereby completes the proof of Lemma \ref{lemma1}.

\end{proof}

\section*{Acknowledgements}
The author would like to thank Ulrich Abel for pointing out the interesting questions inspiring this article.

\end{document}